\newcommand{\beas}{\begin{eqnarray*}}
\newcommand{\eeas}{\end{eqnarray*}}
\newcommand{\bea}{\begin{eqnarray}}
\newcommand{\eea}{\end{eqnarray}}
\newcommand{\beq}{\begin{equation}}
\newcommand{\eeq}{\end{equation}}
\newcommand{\ben}{\begin{enumerate}}
\newcommand{\een}{\end{enumerate}}
\newtheorem{theorem}{Theorem}[section]
\newtheorem{lemma}[theorem]{Lemma}
\newtheorem{proposition}[theorem]{Proposition}
\newtheorem{conjecture}[theorem]{Conjecture}
\theoremstyle{definition}
\definecolor{darkblue}{rgb}{0,0,0.6}
\author[F. Zanello]{Fabrizio Zanello}
\address{Department of Mathematical Sciences, Michigan Tech, Houghton, MI 49931}
\email{zanello@mtu.edu}
\title[On Bergeron's positivity problem for $q$-binomial coefficients]{On Bergeron's positivity problem\\for $q$-binomial coefficients}
\begin{document}

\begin{abstract} F. Bergeron recently asked the intriguing question whether $\binom{b+c}{b}_q -\binom{a+d}{d}_q$  has nonnegative coefficients as a polynomial in $q$, whenever $a,b,c,d$ are positive integers, $a$ is the smallest, and $ad=bc$. We conjecture that, in fact, this polynomial is also always unimodal, and combinatorially show our conjecture for $a\le 3$ and any $b,c\ge 4$. The main ingredient will be a novel (and rather technical) application of Zeilberger's KOH theorem.
\end{abstract}

\keywords{$q$-binomial coefficient; unimodality; positivity; KOH theorem}
\subjclass[2010]{Primary: 05A15; Secondary: 05A17, 05A20}

\maketitle

\section{Introduction} 

An interesting problem recently posed by F. Bergeron \cite{Ber}, which naturally arose in his studies of the $q$-Foulkes conjecture, is whether the coefficients of the symmetric polynomial $\binom{b+c}{b}_q -\binom{a+d}{d}_q$ always form a nonnegative sequence, for any choice of positive integers $a,b,c,d$ where $a$ is the smallest and $ad=bc$. Here, $\binom {m+n}{m}_q$ as usual denotes the \emph{$q$-binomial coefficient}
$$\frac{(1-q)(1-q^2)\cdots (1-q^{m+n})}{(1-q)(1-q^2)\cdots (1-q^m)\cdot (1-q)(1-q^2)\cdots (1-q^{n})}.$$
It is easily seen that $\binom {m+n}{m}_q$  is a symmetric polynomial in $q$ of degree $mn$.

We remark here that a special case of Bergeron's question already appeared in Abdesselam-Chipalkatti \cite{AC2}, as a consequence of a more general conjecture. The same authors proved in \cite{AC1} the case $a=2$ of their conjecture. (We thank A. Abdesselam for pointing this out to us.) 

In this note, we conjecture that not only nonnegativity but also unimodality does hold in this context, and provide a combinatorial proof of our conjecture for $a\le 3$ and any $b,c\ge 4$. Recall that a sequence of numbers is \emph{unimodal} if it does not increase strictly after a strict decrease. We have:
\begin{conjecture}\label{conj}
Fix any  positive integers $a,b,c,d$ such that $a$ is the smallest and $ad=bc$. Then the coefficients of the symmetric polynomial
$$\binom{b+c}{b}_q -\binom{a+d}{d}_q$$
are nonnegative and unimodal.
\end{conjecture}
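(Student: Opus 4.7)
The overall strategy is to invoke the KOH theorem, which (following O'Hara's identity and Zeilberger's constructive proof of the unimodality of Gaussian polynomials) decomposes $\binom{m+n}{m}_q$ as a nonnegative sum of symmetric unimodal polynomials, each being a product of smaller $q$-binomials times a power of $q$ and each centered at $mn/2$. Since $\binom{b+c}{b}_q$ and $\binom{a+d}{d}_q$ share the same degree $bc=ad$, and hence the same center, any exhibition of their difference as a nonnegative combination of such ``KOH building blocks'' centered at $bc/2$ would simultaneously yield nonnegativity and unimodality.

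My plan is to expand both polynomials via KOH and then, for each small value $a \in \{1,2,3\}$, match the (very few) summands in the expansion of $\binom{a+d}{d}_q$ against carefully selected summands in the (much larger) expansion of $\binom{b+c}{b}_q$. For $a=1$, where $\binom{a+d}{d}_q = 1+q+\cdots+q^{bc}$, the matching should be essentially immediate. For $a=2$ and $a=3$, I would construct an explicit injection from the partitions of $a$ into the partitions of $b$ (or $c$) indexing KOH, and then verify coefficient-wise dominance of the matched summands. The unmatched summands of $\binom{b+c}{b}_q$, together with the nonnegative residues from the matched pairs, would then form a nonnegative sum of symmetric unimodal polynomials, all centered at $bc/2$, hence a symmetric unimodal polynomial of the same degree.

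The main obstacle lies in the second step: engineering the right matching and proving term-by-term dominance. The $q$-binomial factors appearing in the two KOH expansions differ in subtle ways, and one must track exactly how the shifts (the powers of $q$ attached to each KOH summand) interact under the matching, so that the corresponding summand on the $\binom{b+c}{b}_q$ side genuinely dominates. A case analysis based on the residues of $b$ and $c$ modulo $a$ (and in the $a=3$ case, possibly finer parity data) appears unavoidable, and this is presumably the ``novel and rather technical'' ingredient referenced in the abstract. In particular, the hypothesis $b,c \ge 4$ should enter precisely to ensure that the auxiliary $q$-binomial factors produced by the matching are nondegenerate and have enough ``room'' to absorb the subtraction; smaller values of $b$ or $c$ would collapse some factors to constants and would have to be handled by separate ad hoc arguments (if at all).
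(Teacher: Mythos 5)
Your overall framing is sound: since each KOH summand of $\binom{b+c}{b}_q$ is symmetric and unimodal with center of symmetry $bc/2=ad/2$, exhibiting the difference as a nonnegative combination of such pieces (equivalently, proving a degreewise inequality of first differences truncated at the middle degree) would give nonnegativity and unimodality simultaneously, and this is indeed the engine of the paper's $a=3$ argument. But what you have written is a plan with the entire technical content deferred: the ``explicit injection from the partitions of $a$ into the partitions of $b$'' and the ``term-by-term dominance'' are precisely where all the difficulty lives, and you give no construction and no verification. As it stands this does not prove anything.

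Moreover, the specific plan you sketch would fail without an additional idea you do not mention. For $a=3$ the KOH expansion of $\binom{d+3}{3}_q$ has only three summands, indexed by the partitions $(3)$, $(2,1)$, $(1,1,1)$ of $3$, and the summand $q^6\binom{d-1}{3}_q$ coming from $(3)$ is nearly as large as the whole polynomial; no single KOH summand of $\binom{b+c}{b}_q$, nor any injectively matched one, can dominate it. The paper's key move is to \emph{iterate} the KOH decomposition on $\binom{d+3}{3}_q$ a total of $b/3$ times, so that the surviving three-row piece becomes exactly $q^{2b}\binom{d-4b/3+3}{3}_q$, which coincides with the contribution $F_{(3,3,\dots,3)}$ to the KOH expansion of $\binom{b+c}{b}_q$; only after this cancellation can the remaining pieces (which are now products of two $q$-integers times a power of $q$) be dominated by the first differences of the explicit families $F_{\lambda^{i,j}}$ and $F_{\mu^i}$ attached to partitions of $b$ with parts in $\{1,2,3\}$. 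Note also that the delicate case analysis in the paper is driven by the size of $c$ (the boundary degrees behave differently for $c=4$, $c=5$, and $c\ge 6$) rather than by residues of $b,c$ modulo $a$, and that the paper handles $a=2$ by an entirely different and much shorter route, namely the Pak--Panova strict unimodality theorem, not by KOH matching. Until you actually produce the matching families and verify the degree-range and coefficient inequalities, the proposal has a genuine gap where the proof should be.
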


Notice that symmetry is clear, since both $\binom{b+c}{b}_q$ and $\binom{a+d}{d}_q$ are symmetric polynomials of the same degree, $bc=ad$. Also note that the case $a=1$ of the conjecture is trivial, thanks to the unimodality of $\binom{b+c}{b}_q$ (see e.g. \cite{OH,Pr1,St5,Sy,Zei}). In this note, first we apply a recent result of Pak-Panova \cite{PP}, that we reproved combinatorially in \cite{Za}, to provide a short proof of Conjecture \ref{conj} when $a=2$, for any values of $b$ and $c$. Then, we employ Zeilberger's KOH theorem \cite{Zei} to settle the case $a=3$.

Unimodality results for suitable differences of two $q$-binomial coefficients have also appeared, for instance, in the work of Reiner and Stanton (\cite{RS}, Theorems 1 and 5), who employed interesting methods of representation theory in their proofs. See also their Conjecture 9 (or \cite{Stanton}, Conjecture 7), which provides a broad family of possible further identities, and is still wide open to this day. It would be interesting to investigate whether our combinatorial approach via the KOH theorem, which appears to be new in this context, might also help with the Reiner-Stanton conjecture.

\section{Proof of the conjecture for $a\le 3$}

We begin with a proof of Conjecture \ref{conj} for $a=2$. Notice that even this ``simplest'' case, which we are now able to show rather easily, heavily relies on the \emph{strict unimodality} of a $q$-binomial coefficient $\binom{b+c}{b}_q$ (i.e.,  $\binom{b+c}{b}_q$ is unimodal and its coefficients are strictly increasing through degree $\lfloor bc/2 \rfloor$, except from degree $0$ to $1$). This fact was established only recently.

\begin{lemma}[\cite{PP,Za}]\label{strict}
Assume $c\ge b\ge 2$. Then $\binom{b+c}{b}_q$ is strictly unimodal if and only if $b=c=2$ or $b\ge 5$, with the following nine exceptions:
$$(b,c)=(5,6), (5,10), (5,14), (6,6), (6,7), (6,9), (6,11), (6,13), (7,10).$$
\end{lemma}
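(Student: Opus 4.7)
The plan is to prove the two directions of the ``iff'' separately. By the standard symmetry
$$[q^j]\binom{b+c}{b}_q=[q^{bc-j}]\binom{b+c}{b}_q,$$
strict unimodality amounts to verifying the strict inequalities
$$[q^j]\binom{b+c}{b}_q > [q^{j-1}]\binom{b+c}{b}_q \quad \text{for all } 2 \leq j \leq \lfloor bc/2 \rfloor.$$
Coupled with the obvious equality $[q^0]=[q^1]=1$, this is what the lemma statement requires.

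For the ``only if'' direction, I would exhibit an explicit internal degree $j \geq 2$ where two consecutive coefficients coincide, in each excluded case. The families $b=3$ and $b=4$ can be handled uniformly in $c$ via the identity $[q^j]\binom{b+c}{b}_q = p_{b,c}(j)$, the number of partitions of $j$ into at most $b$ parts each of size $\leq c$: for $b\in\{3,4\}$, one can analyze the increments $p_{b,c}(j) - p_{b,c}(j-1)$ directly and show that at least one such increment vanishes for every $c \geq b$ (here the small number of parts keeps the partition counts rigid enough to force equalities). The nine sporadic pairs $(b,c)$ are finitely many and are disposed of by direct computation of $\binom{b+c}{b}_q$, each revealing at least one internal equality.

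For the ``if'' direction, the case $b=c=2$ is immediate from $\binom{4}{2}_q = 1+q+2q^2+q^3+q^4$. For the remaining cases with $b \geq 5$, the natural tool is Zeilberger's KOH theorem, which decomposes the coefficient differences as a finite nonnegative sum
$$[q^j]\binom{b+c}{b}_q - [q^{j-1}]\binom{b+c}{b}_q = \sum_{\lambda} c_\lambda(b,c,j),$$
where $\lambda$ ranges over partitions satisfying explicit constraints depending on $b$, $c$, $j$, and each $c_\lambda \geq 0$. In particular, ordinary unimodality (Sylvester's theorem) is immediate, and the strict inequality at any chosen degree $j$ reduces to producing a single partition $\lambda$ with $c_\lambda > 0$.

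The main obstacle, and the technical heart of the proof, is the combinatorial case analysis required to manufacture such a $\lambda$ uniformly in $j$ and in every $(b,c)$ with $b \geq 5$ outside the nine listed pairs. Concretely, one has to design several explicit families of partitions, tailored to the different regimes of $j$ (small, middle, and large relative to $\lfloor bc/2 \rfloor$) and to the size of $b$, and then verify that for each eligible $(b,c)$ at least one family contributes strictly positively to the KOH sum at every $j \in [2,\lfloor bc/2\rfloor]$. The delicate regime is small $b$ with $j$ near $\lfloor bc/2\rfloor$, which is precisely where the nine sporadic exceptions at $b \in \{5,6,7\}$ appear; confirming that no further exception is missed is the most error-prone step and requires bounding KOH contributions carefully whenever the ``obvious'' choices of $\lambda$ fail. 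Once this case analysis is complete, strict unimodality for all remaining $(b,c)$ follows at once.
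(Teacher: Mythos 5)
First, a point of context: the paper does not prove Lemma~\ref{strict} at all --- it is imported verbatim from \cite{PP} and \cite{Za}, so there is no internal proof to compare against. Your outline does follow the strategy of the second of those sources (reduce strictness of each coefficient difference to exhibiting one KOH summand $F_\lambda$ with a positive contribution at the relevant degree), so the plan is sound in principle. Two small imprecisions: in the ``only if'' direction you omit the family $b=2$, $c\ge 3$, which is also excluded by the lemma (easy, since $\binom{c+2}{2}_q$ has coefficients $1,1,2,2,3,3,\dots$, but it needs to be said); and in the KOH decomposition the partitions $\lambda$ range over all partitions of $b$ independently of $j$ --- it is the contribution $c_\lambda$, not the index set, that depends on $j$.

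The genuine gap is that the proposal is a plan rather than a proof: every step that carries actual mathematical weight is deferred. For $b=3,4$ you assert that ``the small number of parts keeps the partition counts rigid enough to force equalities'' without producing the degree $j$ where $p_{b,c}(j)=p_{b,c}(j-1)$ or any argument that one exists for every $c$. For the ``if'' direction you explicitly name the construction of the witness partitions $\lambda$ --- uniformly in $j\in[2,\lfloor bc/2\rfloor]$ and in $(b,c)$ with $b\ge 5$ outside the nine exceptions --- as ``the main obstacle'' and ``the technical heart,'' and then do not carry it out. That case analysis is the entire content of the lemma: it is where the threshold $b\ge 5$ and the precise list of nine sporadic pairs come from, and without it one cannot certify that the exception list is complete (nor, conversely, that each of the nine listed pairs genuinely fails). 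As written, the proposal establishes only ordinary unimodality (Sylvester/O'Hara), which is already classical, and records an intention to prove the strict version.
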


\begin{proposition}
Conjecture \ref{conj} is true when $a=2$.
\end{proposition}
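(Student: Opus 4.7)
The plan is to combine the explicit form of $\binom{d+2}{2}_q$ with Pak--Panova's strict unimodality of $\binom{b+c}{b}_q$ (Lemma \ref{strict}) and carry out a coefficient-by-coefficient comparison. By the symmetry of the $q$-binomial I may assume $c\ge b$. The case $b=2$ is trivial, since then $d=c$ and $\binom{b+c}{b}_q=\binom{a+d}{d}_q$, so the difference is identically zero.

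Now assume $b\ge 3$ and write $\binom{b+c}{b}_q=\sum u_k q^k$ and $\binom{d+2}{2}_q=\sum v_k q^k$, both symmetric of common degree $bc=2d$. Viewing $v_k$ as the number of partitions of $k$ into at most two parts each $\le d$, one gets $v_k=\lfloor k/2\rfloor+1$ for $0\le k\le d$, so the consecutive differences $v_{k+1}-v_k$ in the range $0\le k\le d-1$ equal $0$ at even $k$ and $1$ at odd $k$. Because both sequences are symmetric about $k=d$, the proposition reduces to the coefficient inequality
\[
u_{k+1}-u_k \;\ge\; v_{k+1}-v_k \qquad (0\le k\le d-1),
\]
together with the boundary relation $u_0=v_0=1$: the inequality forces $u_k-v_k$ to be non-decreasing on $[0,d]$, and symmetry then yields both nonnegativity and unimodality on $[0,2d]$. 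Whenever Lemma \ref{strict} certifies strict unimodality of $\binom{b+c}{b}_q$, we have $u_{k+1}-u_k\ge 1\ge v_{k+1}-v_k$ for $1\le k\le d-1$, while at $k=0$ both sides vanish. This settles every $(b,c)$ outside the cases $b\in\{3,4\}$ and the nine sporadic exceptions of the lemma.

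The remaining cases constitute the main obstacle. My plan is to verify that in each of them every plateau $u_{k+1}=u_k$ with $0\le k\le d-1$ occurs at an \emph{even} index $k$, which is automatically a position where $v_{k+1}-v_k=0$, so the required inequality still holds throughout. For the nine sporadic pairs this is a finite computer check. For the infinite families $b=3$ (with the parity constraint $bc$ even forcing $c$ even) and $b=4$, I would proceed combinatorially, constructing for every odd $k\in[1,d-1]$ an explicit injection from the partitions of $k$ fitting in the $b\times c$ box into those of $k+1$ fitting in the same box, with at least one extra partition outside the image; this yields $u_{k+1}-u_k\ge 1$ and finishes the comparison. Pinning down this refined ``odd-step strict unimodality'' for $b=3$ and $b=4$ uniformly in $c$ is the technical heart I expect to be the hardest step of the argument.
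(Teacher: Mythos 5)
Your reduction is exactly the one in the paper: compute $v_k=\lfloor k/2\rfloor+1$ for $k\le d$, observe that the required inequality $u_{k+1}-u_k\ge v_{k+1}-v_k$ only bites at odd $k$ (equivalently, that $\binom{b+c}{b}_q$ must increase strictly into every even degree $\le bc/2$), and then invoke Lemma \ref{strict} for $b\ge 5$ together with a finite check of the nine sporadic pairs. All of that is correct and matches the paper.

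The genuine gap is the one you flag yourself: the infinite families $b=3$ (with $c$ even) and $b=4$ are not covered by Lemma \ref{strict}, and for them you only describe a plan (an injection argument producing one extra partition at each odd-to-even step) without carrying it out. Since these are infinitely many values of $c$, the ``finite check'' escape is unavailable, and the proposed injection is precisely the hard combinatorial content you would still need to construct. The paper closes this in one stroke by citing the explicit symmetric chain decompositions of the posets $L(3,n)$ (Lindstr\"om) and $L(4,n)$ (West): in such a decomposition the quantity $u_{k+1}-u_k$ for $k+1\le bc/2$ equals the number of chains starting at rank $k+1$, and one reads off from those constructions that at least one new chain begins at every even rank $\le bc/2$, which is exactly the ``odd-step strictness'' you need. (For $b=4$ one can alternatively quote Lemma 2.1(b) of Stanley--Zanello \cite{SZ}.) Without importing some such known structural result, your argument is incomplete precisely where you predicted it would be.
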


\begin{proof}
Let $$\binom{d+2}{2}_q=\sum_{0\le i\le 2d}a_iq^i {\ }{\ }{\ }  \text{and} {\ }{\ }{\ } \binom{b+c}{b}_q=\sum_{0\le i\le bc}b_iq^i,$$
where we can assume that $c\ge b\ge 3$. Proving the result is tantamount to showing that $b_i-b_{i-1}\ge a_i-a_{i-1}$, for all $i\le d=bc/2$. 

It is easy to see directly that, for $i\le d$, $a_i=\lceil (i+1)/2\rceil$. Hence, $a_i-a_{i-1}=1$ if $i$ is even, and $a_i-a_{i-1}=0$ if $i$ is odd. Thus, it suffices to show that $\binom{b+c}{b}_q$ is strictly increasing in all even degrees up to $bc/2$. 

If $b\ge 5$, the result  follows from Lemma \ref{strict}, since one can easily verify computationally that all nine exceptional $q$-binomial coefficients of the lemma satisfy the inequality $b_{i-1}<b_i$ when $i\le bc/2$ is even.

If $b\le 4$, the conclusion can be obtained in a few different ways.  For a combinatorial proof, one can rely on the fact that in a symmetric chain decomposition of the poset $L(b,c)$ for $b=3,4$ (we can assume $c$ even when $b=3$), at least one new chain is introduced in every even degree $\le bc/2$ (see \cite{lind,west}). When $b=4$, notice that the result also immediately follows from \cite{SZ}, Lemma 2.1 (b).
\end{proof}

Before proving Conjecture \ref{conj} for $a=3$, we recall Zeilberger's {KOH theorem} \cite{Zei}. This result rephrases, in algebraic terms, O'Hara's celebrated combinatorial proof of the unimodality of $q$-binomial coefficients \cite{OH}, by decomposing these latter into suitable finite sums of unimodal, symmetric polynomials with nonnegative coefficients. %all symmetric about $mn/2$.  

Fix  positive integers $m$ and $n$, and for any  partition $\lambda=(\lambda_1,\lambda_2,\dots)\vdash m$, set $Y_i= \sum_{1\le j\le i}\lambda_j$ for $i\geq 1$, and $Y_0=0$.

\begin{lemma} [\cite{Zei}]\label{koh}
We have $\binom{m+n}{m}_q=\sum_{\lambda\vdash m}F_{\lambda}(q)$, where
$$F_{\lambda}(q)= q^{2\sum_{i\geq 1}\binom{\lambda_i}{2}} \prod_{j\geq 1} \binom{j(n+2)-Y_{j-1}-Y_{j+1}}{\lambda_j-\lambda_{j+1}}_q.$$
\end{lemma}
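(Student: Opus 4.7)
The plan is to prove the KOH identity combinatorially, following the reformulation by Zeilberger of O'Hara's original argument. The starting point is the classical interpretation
$$\binom{m+n}{m}_q \;=\; \sum_{\mu\subseteq (n^m)} q^{|\mu|},$$
where the sum runs over all partitions $\mu$ fitting inside an $m\times n$ box. The strategy is to partition the set of such $\mu$ into classes indexed by partitions $\lambda\vdash m$, and then to show that the generating function restricted to the class indexed by $\lambda$ is exactly $F_\lambda(q)$.

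The combinatorial heart of the argument is O'Hara's construction, which to each $\mu$ associates a ``type'' $\lambda\vdash m$ encoding the successive Durfee-square (or Durfee-rectangle) decomposition of $\mu$. Once $\lambda$ is fixed, the residual choices in reconstructing $\mu$ decouple: each factor $\binom{j(n+2)-Y_{j-1}-Y_{j+1}}{\lambda_j-\lambda_{j+1}}_q$ in $F_\lambda(q)$ is meant to enumerate partitions in one of a sequence of rectangular regions carved out by the Durfee staircase, while the monomial prefactor $q^{2\sum_i\binom{\lambda_i}{2}}$ records the total area of the forced ``skeleton'' of shape $\lambda$ (namely, the partition obtained when all of the free parameters are set to zero). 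Summing over $\lambda\vdash m$ then recovers the left side.

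The main obstacle is correctly identifying the ambient rectangles. The dimension $j(n+2)-Y_{j-1}-Y_{j+1}$ should arise from a careful accounting in which the Durfee constraints coming from levels on both sides of $j$ — namely $Y_{j-1}$ from shallower parts and $Y_{j+1}$ from deeper parts of the partition — are subtracted from an inflated horizontal budget of $j(n+2)$ tied to the Durfee-shift convention. Pinning down this dimension precisely, and showing that the resulting rectangles are mutually independent once $\lambda$ is fixed, is the delicate part. A sensible way to proceed is to verify everything by hand for $m=1$ (where only $\lambda=(1)$ appears and $F_{(1)}=\binom{n+1}{1}_q$) and for $m=2$ (where $\lambda\in\{(2),(1,1)\}$ and a short computation matches $\binom{n+2}{2}_q$), and then to formalize the general bijection. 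An alternative route would be an algebraic proof by induction on $m$ using the $q$-Vandermonde convolution to telescope, but the combinatorial approach is preferable since it makes the nonnegativity, symmetry, and unimodality of each $F_\lambda(q)$ manifest from the product structure — which, after all, is the whole point of KOH as a tool for proving the unimodality of $\binom{m+n}{m}_q$.
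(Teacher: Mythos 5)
The paper does not actually prove Lemma \ref{koh}: it is quoted verbatim from Zeilberger \cite{Zei} (where it is derived, by an inductive/generating-function argument, from O'Hara's structure lemma \cite{OH}), so your proposal must be judged against the literature rather than against a proof in this note. Judged that way, it has a genuine gap: the entire content of the identity is the claim that the generating function of your $\lambda$-indexed class equals $F_\lambda(q)$, and you explicitly defer exactly that step (``pinning down this dimension precisely, and showing that the resulting rectangles are mutually independent \dots is the delicate part''). Everything else in the proposal --- the box interpretation of $\binom{m+n}{m}_q$, the checks for $m=1,2$ --- is routine and does not touch the theorem.

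Moreover, the mechanism you propose is doubtful, not merely unfinished. A ``successive Durfee-square type'' statistic is not O'Hara's construction, and there is no evidence it produces classes with generating functions $F_\lambda(q)$. Concretely, $F_{(1^m)}(q)=\binom{mn+1}{1}_q=1+q+\cdots+q^{mn}$, so the class of type $(1^m)$ would have to contain exactly one partition of each size $0,1,\dots,mn$ --- a very rigid condition that no natural Durfee statistic is known to satisfy. Already for $m=n=2$ one has $F_{(2)}(q)=q^2$, so your rule must select exactly one of the two partitions of $2$ inside the $2\times 2$ box; but $(2)$ and $(1,1)$ have identical $1\times 1$ Durfee squares at every level, so the proposed type does not separate them. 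The known proofs (Zeilberger's, and Macdonald's later simplification) work by induction on $m$ through an algebraic recursion, precisely because no direct set-partition of the box realizing the $F_\lambda$ is known; your ``alternative route'' via $q$-Vandermonde telescoping is in fact much closer to what a correct proof must look like, and should be the main route rather than the fallback.
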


\begin{theorem}
Conjecture \ref{conj} is true when $a=3$.
\end{theorem}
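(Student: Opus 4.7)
I would apply KOH (Lemma~\ref{koh}) to both $\binom{b+c}{b}_q$ and $\binom{3+d}{d}_q$, cancel a common ``linear'' piece, and re-express the remaining difference as a nonnegative combination of products of $q$-binomials, all centered at the common point $bc/2=3d/2$. The key observation is that a sum of symmetric unimodal polynomials sharing a common center is itself symmetric and unimodal, so such a re-expression would give nonnegativity and unimodality simultaneously.

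Concretely, applying KOH to $\binom{3+d}{d}_q=\binom{d+3}{3}_q$ with $m=3$ produces only three pieces (one per partition of $3$):
\[
\binom{3+d}{d}_q = \binom{3d+1}{1}_q + q^2\binom{d-1}{1}_q\binom{2d-1}{1}_q + q^6\binom{d-1}{3}_q.
\]
Applying KOH to $\binom{b+c}{b}_q$ with $m=b$ yields $\sum_{\lambda\vdash b}F_\lambda(q)$, and a direct computation shows that $F_{(1^b)}(q)=\binom{bc+1}{1}_q$, which---since $bc=3d$---cancels the first term above. Thus
\[
\binom{b+c}{b}_q - \binom{3+d}{d}_q \;=\; \sum_{\substack{\lambda\vdash b\\\lambda\neq(1^b)}}F_\lambda(q)\;-\;q^2\binom{d-1}{1}_q\binom{2d-1}{1}_q\;-\;q^6\binom{d-1}{3}_q.
\]
Every remaining summand is a product of $q$-binomials (up to a power of $q$), hence symmetric unimodal with nonneg coefficients; a routine verification shows that each is centered at $bc/2$. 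The task therefore reduces to re-expressing the right-hand side as a nonneg combination of such products. The most natural candidates for absorbing the two subtracted terms are the hook-shaped KOH pieces $F_{(2,1^{b-2})}(q)=q^2\binom{c-1}{1}_q\binom{(b-1)c-1}{1}_q$ and $F_{(3,1^{b-3})}(q)=q^6\binom{c-2}{2}_q\binom{(b-2)c-3}{1}_q$, possibly supplemented by $F_{(2,2,1^{b-4})}(q)$ and other low-complexity pieces.

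The main obstacle is a shape mismatch: when $b\geq 4$ one has both $c-1<d-1$ and $2d-1<(b-1)c-1$, so neither of the competing rectangles is contained in the other and no single hook dominates the corresponding subtracted term coefficient-wise. One must therefore split each subtracted product into ``strips'' and distribute these across several $F_\lambda$'s, exploiting the trapezoidal/plateau structure of coefficient sequences of products of linear $q$-binomials $\binom{k}{1}_q$. This delicate bookkeeping---likely organized by a case analysis according to the residue class of $c$ (or $b$) modulo $3$, reflecting the constraint $3\mid bc$, and further separated by which pair $(b,c)$ puts the relevant plateau heights in which order---is the technical heart of the argument, and should account for the ``rather technical'' nature advertised in the introduction. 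Once the correct grouping is identified, each individual comparison reduces to an elementary (if tedious) inequality between sums of min-plateau sequences, which I would verify symbolically in $b$ and $c$.
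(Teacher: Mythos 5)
Your setup is sound as far as it goes: applying KOH to both sides, cancelling $F_{(1^b)}=\binom{bc+1}{1}_q$ against $\binom{3d+1}{1}_q$, and reducing to a domination statement among symmetric polynomials centered at $bc/2$ (equivalently, among truncated first differences) is exactly the right framework, and your computations of $F_{(2,1^{b-2})}$ and $F_{(3,1^{b-3})}$ are correct. But there is a genuine gap: everything after ``the task therefore reduces to'' is a description of a difficulty rather than a proof. You correctly identify that no hook $F_\lambda$ dominates $q^2\binom{d-1}{1}_q\binom{2d-1}{1}_q$ or $q^6\binom{d-1}{3}_q$ coefficientwise, but the proposed remedy --- splitting into strips and distributing them over low-complexity partitions such as $(2,1^{b-2})$, $(3,1^{b-3})$, $(2,2,1^{b-4})$ --- is never carried out, and the grouping you would need is precisely the hard part. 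In particular, the cubic piece $q^6\binom{d-1}{3}_q$ cannot plausibly be absorbed by a bounded number of hook-shaped terms: it is a genuinely three-dimensional object whose truncation has coefficients growing linearly, while each hook contributes (after truncation) only a product of two linear factors.

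The paper's key idea, which your plan is missing, is to not stop after one application of KOH to $\binom{d+3}{3}_q$: after assuming $b\equiv 0\pmod 3$, one iterates KOH on the partition-$(3)$ piece a total of $b/3$ times, telescoping $q^6\binom{d-1}{3}_q$ into $q^{2b}\binom{d-4b/3+3}{3}_q$ plus a sum of $b/3$ products of linear $q$-binomials; the residual cubic term then matches \emph{exactly} the KOH contribution $F_{(3,3,\dots,3)}$ of $\binom{b+c}{b}_q$ and cancels. What remains on the left is a sum of products of at most two linear $q$-binomials, which is dominated (at the level of truncated first differences) by a two-parameter family of partitions $\lambda^{i,j}$ of $b$ with parts of sizes $3,2,1$ together with a family $\mu^i$ with parts of sizes $2,1$, via a case analysis on $c=4$, $c=5$, $c\ge 6$ and a couple of small sporadic checks. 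Without the iteration step the two sides are structurally mismatched, and without an explicit family of absorbing partitions and verified inequalities the argument is not a proof; so as written the proposal establishes only the (correct) opening reduction.
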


\begin{proof}
Let
$$\binom{d+3}{3}_q=\sum_{0\le i\le 3d}a_iq^i {\ }{\ }{\ }  \text{and} {\ }{\ }{\ }\binom{b+c}{b}_q=\sum_{0\le i\le bc}b_iq^i,$$
where $b,c\ge 4$. We can assume that $b\equiv 0$ (mod 3). With some abuse of notation, define the first difference of a $q$-binomial coefficient as its truncation in the middle degree; i.e., we denote by $(1-q) \binom{d+3}{3}_q$ the polynomial $ 1+\sum_{1\le i\le 3d/2}(a_i-a_{i-1})q^i$, and similarly, $(1-q) \binom{b+c}{b}_q=1+ \sum_{1\le i\le bc/2}(b_i-b_{i-1})q^i$. 

Thus, showing the result is equivalent to proving that $$(1-q) \binom{d+3}{3}_q\le (1-q) \binom{b+c}{b}_q,$$
where inequalities between polynomials are defined degreewise (i.e., by $\sum \alpha_iq^i \le \sum \beta_iq^i$  we mean $\alpha_i \le \beta_i$ for all $i$).

By Lemma \ref{koh}, we can decompose $\binom{d+3}{d}_q$ as
$$\binom{d+3}{3}_q= q^6\binom{d-1}{3}_q+q^2\binom{d-1}{1}_q\binom{2d-1}{1}_q+\binom{3d+1}{1}_q,$$
where the first term on the right side corresponds to the partition $(3)$ of 3, the second to $(2,1)$, and the third to $(1,1,1)$.

By iterating Lemma \ref{koh} a total of $b/3$  times on the right side, standard computations give us that $\binom{d+3}{3}_q$ equals:
\begin{equation}\label{a}
q^{2b}\binom{d-4b/3+3}{3}_q + \sum_{0\le i\le \frac{b-3}{3}}q^{6i+2}\binom{d-4i-1}{1}_q\binom{2d-8i-1}{1}_q + q^{6i}\binom{3d-12i+1}{1}_q.
\end{equation}

If we now consider the partition $(3,3,\dots,3)$ of $b$, we see that its contribution to the KOH decomposition of $\binom{b+c}{b}_q$, by Lemma \ref{koh}, is given by:
$$q^{\frac{b}{3}\cdot 2\cdot \binom{3}{2}}\binom{(c+2)b/3 - (b-3)-b}{3}_q=q^{2b}\binom{d-4b/3+3}{3}_q,$$
which is precisely the first summand in (\ref{a}).

Thus, by (\ref{a}), we want to show that
\begin{equation}\label{a1}
(1-q)\left(\sum_{0\le i\le \frac{b-3}{3}}q^{6i+2}\binom{d-4i-1}{1}_q\binom{2d-8i-1}{1}_q + q^{6i}\binom{3d-12i+1}{1}_q\right)\leq (1-q)\sum F_{\lambda}(q),
\end{equation}
where the sum on the right is as in the KOH decomposition of $\binom{b+c}{b}_q$, and is indexed over all partitions $\lambda \vdash b$, $\lambda \neq (3,3,\dots,3)$.

We have:
$$(1-q)\left(\sum_{0\le i\le \frac{b-3}{3}}q^{6i+2}\binom{d-4i-1}{1}_q\binom{2d-8i-1}{1}_q + q^{6i}\binom{3d-12i+1}{1}_q\right)$$$$=  \sum_{0\le i\le \frac{b-3}{3}}(1-q)q^{6i+2}(1+q+\dots+q^{d-4i-2})\cdot \frac{1-q^{2d-8i-1}}{1-q}+(1-q)q^{6i}\cdot \frac{1-q^{3d-12i+1}}{1-q} $$$$=\sum_{0\le i\le \frac{b-3}{3}}(1+q+\dots+q^{d-4i-2})(q^{6i+2}-q^{2d-2i+1})+(q^{6i}-q^{3d-6i+1}).$$

Note that both $2d-2i+1$ and $3d-6i+1$ are larger than $bc/2=3d/2$ for all of our indices $i$, since $c\ge 4$. Thus, since the first difference is defined to be up to degree $bc/2=3d/2$, the last displayed formula becomes:
\begin{equation}\label{aa}
\sum_{0\le i\le \frac{b-3}{3}}( q^{6i}+q^{6i+2}+q^{6i+3}+\dots+q^{d+2i}).
\end{equation}

It is the polynomial in (\ref{aa}) that we will bound with the first difference of suitable families of terms appearing in the KOH decomposition of $\binom{b+c}{b}_q$. We begin by dominating
$$\sum_{1\le i\le \frac{b-3}{3}}(q^{6i+2}+q^{6i+3}+\dots+q^{d+2i}).$$
Hence, for now, we are not concerned with $(q^2+q^3+\dots+q^{d})+\sum_{0\le i\le \frac{b-3}{3}}q^{6i}$. 

Consider the following partitions of $b$:
$$\lambda^{i,j}=(\lambda_1^{i,j}=3,\dots,\lambda_i^{i,j}=3,\lambda_{i+1}^{i,j}=2,\dots,\lambda_{i+j}^{i,j}=2,\lambda_{i+j+1}^{i,j}=1,\dots,\lambda_{b-2i-j}^{i,j}=1),$$
for any indices
$$1\le i\le (b-3)/3 {\ }{\ }{\ }  \text{and} {\ }{\ }{\ } 1\le j\le \lfloor b/2-2i(c-1)/c \rfloor .$$
Since $b\ge 6$ (because $ b\equiv 0$ (mod 3)) and
$$b/2-2i(c-1)/c\le (b-3i)/2$$
for $c\ge 4$, all  partitions $\lambda^{i,j}$ contain at least one part of each size 1, 2, and 3.

We want to show that
$$\sum_{i,j}(1-q)F_{\lambda^{i,j}}(q)\ge \sum_{1\le i\le \frac{b-3}{3}}(q^{6i+2}+q^{6i+3}+\dots+q^{d+2i}).$$
Employing Lemma \ref{koh}, the contribution of $\lambda^{i,j}$ to the KOH decomposition of $\binom{b+c}{b}_q$ is
$$F_{\lambda^{i,j}}(q)=$$$$q^{6i+2j}\binom{(c+2)i-6i+1}{1}_q\binom{(c+2)(i+j)-6i-4j+1}{1}_q\binom{(c+2)(b-2i-j)-2b+1}{1}_q$$$$=q^{6i+2j}\binom{ci-4i+1}{1}_q\binom{ci-4i+cj-2j+1}{1}_q\binom{bc-2ci-4i-cj-2j+1}{1}_q.$$

For all $i$ and $j$ as above, we have
$$(6i+2j)+(bc-2ci-4i-cj-2j+1)=bc-2ci+2i-cj+1>bc/2.$$
Therefore,
$$(1-q)F_{\lambda^{i,j}}(q)=(1-q)\binom{ci-4i+1}{1}_q\binom{ci-4i+cj-2j+1}{1}_q\cdot \frac{ q^{6i+2j}-q^{bc-2ci+2i-cj+1}}{1-q}$$$$=q^{6i+2j}\binom{ci-4i+1}{1}_q\binom{ci-4i+cj-2j+1}{1}_q$$$$=(1+q+\dots+q^{ci-4i})(q^{6i+2j}+q^{6i+2j+1}+\dots+q^{ci+2i+cj}).$$

Thus, our goal is to show that
\begin{equation}\label{1}
\sum_{i,j}(1+q+\dots+q^{ci-4i})(q^{6i+2j}+\dots+q^{ci+2i+cj})\ge \sum_{1\le i\le \frac{b-3}{3}}(q^{6i+2}+\dots+q^{d+2i}).
\end{equation}

We first consider the case $c=4$. For any given index $i=1,\dots, (b-3)/3$, the contribution of $i$ to the left side of (\ref{1}), which is given by
$$\sum_{j}(q^{6i+2j}+\dots+q^{6i+4j}),$$
clearly dominates the contribution of $i$ to the right side, since the largest degree that appears on the left side is
$$6i+4\lfloor b/2 -3i/2\rfloor \ge d+2i=(4b/3)+2i.$$
Also notice the following fact, which will be useful later on: When $b>6$ is even,  the coefficient of degree $2b-6$ on the left side of (\ref{1}), say $l_{2b-6}$, is \emph{strictly} greater than the corresponding coefficient on the right side, say $r_{2b-6}$. Indeed, a standard computation shows that $r_{2b-6}=2$. As for $l_{2b-6}$, one can see that it is at least 4, by considering the contribution, to the degree $2b-6$ coefficient on the left side of (\ref{1}), of the following four pairs of indices $(i,j)$:
$$((b-6)/3,2), ((b-6)/3,3), ((b-9)/3,3), ((b-9)/3,4).$$
This completes the proof that
\begin{equation}\label{rl}
l_{2b-6}>r_{2b-6}.
\end{equation}

Now let $c>4$. We consider the two sets of indices
$$1\le i\le \lfloor b/6\rfloor {\ }{\ }{\ }  \text{and} {\ }{\ }{\ } \lfloor b/6\rfloor +1\le i\le (b-3)/3$$
separately. When $i\le \lfloor b/6\rfloor$, we have that
$$\sum_{j}1\cdot(q^{6i+2j}+\dots+q^{ci+2i+cj})\ge (q^{6i+2}+\dots+q^{d+2i}),$$
since
$$ci+2i+c\lfloor b/2-2i(c-1)/c \rfloor \ge d+2i$$
for any $i\le \lfloor b/6\rfloor$.

Fix now an index $i$, $\lfloor b/6\rfloor +1 \le i\le (b-3)/3$. We  want to bound the sum contributed by $i$ to the right side of (\ref{1}) with the sum contributed by $i-\lfloor b/6\rfloor$ to the left side, of course without employing terms already used for $i\le \lfloor b/6\rfloor$. 

A quick thought tells us that we are done whenever the following inequality is true:
\begin{equation}\label{2}
\sum_{j}(q+\dots+q^{(c-4)(i-\lfloor b/6\rfloor)})(q^{6(i-\lfloor b/6\rfloor)+2j}+\dots+q^{(c+2)(i-\lfloor b/6\rfloor)+cj})\ge (q^{6i+2}+\dots+q^{d+2i}).
\end{equation} 

Clearly, (\ref{2}) is verified if we show that for each $i=\lfloor b/6\rfloor +1, \dots, (b-3)/3$, the left side  begins in degree no larger, and ends in degree no smaller, than the right side.  Since $j\le \lfloor b/2-2i(c-1)/c \rfloor$, standard computations give us that this is indeed the case for any $c\ge 5$, if we assume $b\ge 18$ when $c=5$ (the theorem is immediate to check directly for $b<18$ when $c=5$). This shows (\ref{1}).

In order to complete the proof of the theorem, it remains to bound
$$(q^2+q^3+\dots+q^{d})+\sum_{1\le i\le \frac{b-3}{3}}q^{6i},$$
using the KOH contribution to $\binom{b+c}{b}_q$ provided by some new family of partitions of $b$. 

We consider the following partitions:
$$\mu^i=(\mu^i_1=2,\dots,\mu^i_i=2,\mu^i_{i+1}=1,\dots,\mu^i_{b-i}=1),$$
where $1\le i\le \lceil b/2\rceil -1$. By Lemma \ref{koh}, the KOH contribution of $\mu^i$ to $\binom{b+c}{b}_q$ is:
$$F_{\mu^i}(q)= q^{2i}\binom{(c+2)i-4i+1}{1}_q\binom{(c+2)(b-i)-2b+1}{1}_q$$$$=q^{2i}\binom{ci-2i+1}{1}_q\binom{bc-ci-2i+1}{1}_q.$$

We have
$$2i+(bc-ci-2i+1)=bc-ci+1>bc/2,$$
for all $1\le i\le \lceil b/2\rceil -1$ and all $c$. Thus,
$$(1-q)F_{\mu^i}(q)=(1-q)q^{2i}(1+q+\dots+q^{ci-2i})\cdot \frac{1-q^{bc-ci-2i+1}}{1-q}$$$$=(1+q+\dots+q^{ci-2i})(q^{2i}-q^{bc-ci+1})=q^{2i}+q^{2i+1}+\dots+q^{ci}.$$

Since $c(\lceil b/2\rceil -1)\ge d=bc/3$ for any $c\ge 4$, it follows that
$$\sum_{1\le i\le  \lceil b/2\rceil -1} (1-q)F_{\mu^i}(q)=\sum_{1\le i\le  \lceil b/2\rceil -1} q^{2i}+q^{2i+1}+\dots+q^{ci}  \ge q^2+q^3+\dots+q^{d}.$$

We now want to check when 
\begin{equation}\label{66}
\sum_{1\le i\le  \lceil b/2\rceil -1} q^{2i}+q^{2i+1}+\dots+q^{ci}
\end{equation}
also simultaneously dominates $\sum_{1\le i\le \frac{b-3}{3}}q^{6i}$; that is, when in (\ref{66}) the coefficients in each degree $6, 12, \dots, 2b-6$ are at least 2.

Notice that every even power of $q$ that appears in (\ref{66}) for a given index $i< (b-3)/3$ also appears for $i+1$, with the only exception of $q^{2i}$. This, however, appears for $i-1$ if $i\ge 2$, so the total number of times it is present in (\ref{66}) is again at least 2. We conclude that a coefficient equal to 1 in some degree $n\equiv 0$ (mod 6), if it exists, can only be contributed by $i=(b-3)/3$, and such a degree $n$ must be in the range:
$$c(\lceil b/2\rceil -2)+1\le n\le c(\lceil b/2\rceil -1).$$

It follows that the theorem is proven whenever $c(\lceil b/2\rceil -2)+1>2b-6$. This is immediately verified to be the case for any $c\ge 6$. When $c=5$, the only exception is $b=n=6$, but for $b=6$ the theorem can easily be checked directly. Finally, when $c=4$, the only exceptions are  $b$  even and $n=2b-6$. If $b=6$ the theorem is again verified directly; if $b>6$, we employ inequality (\ref{rl}). The proof of the theorem is complete.
\end{proof}

\section{Acknowledgements} I am indebted to Richard Stanley for informing me of Bergeron's positivity problem, and for several helpful discussions during his visit to Michigan Tech in October 2016. In particular, it was during one of those discussions that we came up with the unimodality Conjecture \ref{conj}. I also wish to thank Fran\c{c}ois Bergeron for several comments, and Abdelmalek Abdesselam for informing me of his joint works \cite{AC1,AC2}. A final version of this note was drafted during a visiting professorship at MIT in Fall 2017, for which I am (once again) grateful to Richard. This work was done while I was partially supported by a Simons Foundation grant (\#274577).

%%%%%%%%%%%%%%%%%%%%%%%%%%%%%%%%%%%%%%%%%%%%%%%%%%%%%%%%%%%%%%

\end{document}